\newtheorem{thm}{Theorem}[section]
\newtheorem*{theorem*}{Theorem}
\newtheorem{lemma}[thm]{Lemma}
\newtheorem{cor}[thm]{Corollary}
\newtheorem{prop}[thm]{Proposition}
\newtheorem*{proposition*}{Proposition}
\theoremstyle{definition}
\newtheorem{example}[thm]{Example}
\newtheorem{remark}[thm]{Remark}
\newtheorem{definition}[thm]{Definition}
\newtheorem{notation}[thm]{Notation}
\numberwithin{equation}{section}
\long\def\blankfootnotetext#1{\begingroup\def\thefootnote{\fnsymbol{footnote}}\footnotetext{#1}\endgroup}
\newcommand{\orig}{\mathbf{0}}
\newcommand{\Z}{\mathbb{Z}}
\newcommand{\Q}{\mathbb{Q}}
\newcommand{\C}{\mathbb{C}}
\newcommand{\A}{\mathbb{A}}
\newcommand{\Proj}{\mathbb{P}}
\newcommand{\abs}[1]{\left\vert{#1}\right\vert}
\newcommand{\Hom}[1]{\mathrm{Hom}\mleft({#1}\mright)}
\newcommand{\V}[1]{\mathrm{vert}\mleft({#1}\mright)}
\renewcommand{\gcd}[1]{\mathrm{gcd}\mleft\{{#1}\mright\}}
\newcommand{\mmax}{\mathrm{max}}
\newcommand{\mmin}{\mathrm{min}}
\newcommand{\scone}[1]{\mathrm{cone}\mleft\{{#1}\mright\}}
\newcommand{\sconv}[1]{\mathrm{conv}\mleft\{{#1}\mright\}}
\newcommand{\Sing}[1]{\mathrm{Sing}\mleft({#1}\mright)}
\newcommand{\dual}[1]{{#1}^\vee}
\newcommand{\mut}{\mathrm{mut}}
\newcommand{\NQ}{N_\Q}
\newcommand{\MQ}{M_\Q}
\newcommand{\modb}[1]{\left(\mathrm{mod}\ {#1}\right)}
\newcommand{\SC}[1]{\mathrm{SC}\mleft(#1\mright)}
\newcommand{\BB}{\mathcal{B}}
\newcommand{\res}[1]{\mathrm{res}\mleft({#1}\mright)}
\renewcommand{\emptyset}{\varnothing}
\begin{document}
%-------------------------------------------------------------------------------
\author[M.~E.~Akhtar]{Mohammad E.~Akhtar}
\author[A.~M.~Kasprzyk]{Alexander M.~Kasprzyk}
\address{Department of Mathematics\\Imperial College London\\London, SW$7$\ $2$AZ\\UK}
\email{mohammad.akhtar03@imperial.ac.uk}
\email{a.m.kasprzyk@imperial.ac.uk}
%-------------------------------------------------------------------------------
\blankfootnotetext{2010 \emph{Mathematics Subject Classification}: 14M25 (Primary); 14B05, 14J45 (Secondary).}
%-------------------------------------------------------------------------------
\title{Singularity content}
\maketitle
%-------------------------------------------------------------------------------
\begin{abstract}
 We show that a cyclic quotient surface singularity $\sigma$ can be decomposed, in a precise sense, into a number of elementary $T$-singularities together with a cyclic quotient surface singularity called the residue of $\sigma$. A normal surface $X$ with isolated cyclic quotient singularities $\{\sigma_i\}$ admits a $\Q$-Gorenstein partial smoothing to a surface with singularities given by the residues of the $\sigma_i$. We define the singularity content of a Fano lattice polygon $P$: this records the total number of elementary $T$-singularities and the residues of the corresponding toric Fano surface $X_P$. We express the degree of $X_P$ in terms of the singularity content of $P$; give a formula for the Hilbert series of $X_P$ in terms of singularity content; and show that singularity content is an invariant of $P$ under mutation.
\end{abstract}
%-------------------------------------------------------------------------------
\section{Introduction}
%-------------------------------------------------------------------------------
Let $C$ be a two-dimensional rational cone and let $X_C$ denote the corresponding affine toric surface singularity. Let $u$,~$v$ be primitive lattice points on the rays of $C$. Let $\ell$, the \emph{local index} of $C$, denote the lattice height of the line segment $uv$ above the origin and let $w$, the \emph{width} of $C$, denote the lattice length of $v-u$. Write $w = n\ell + \rho$ for $n,\rho\in\Z_{\geq 0}$ with $0 \leq\rho < \ell$. Then $X_C$ is a $T$-singularity~\cite{KS-B88} if and only if $\rho=0$, and we say that $X_C$ is an \emph{elementary $T$-singularity} if $n=1$ and $\rho=0$ (so $w=\ell$); these correspond to singularities of the form $\frac{1}{n\ell^2}(1,n\ell c-1)$ and $\frac{1}{\ell^2}(1,\ell c-1)$, respectively. Choose a decomposition of $C$ into a cone $R$, of width $\rho$ and local index $\ell$, and $n$ other cones, each of width and local index $\ell$. Then, up to lattice isomorphism, $R$ depends only on $C$ and not on the decomposition chosen (Proposition~\ref{prop:cone_decomposition}) and we give explicit formula for $R$ in terms of $C$. There is a $\Q$-Gorenstein deformation of $X_C$ such that the general fibre is the affine toric surface singularity $X_R$ (Proposition~\ref{prop:deformation_to_residual}). We call $X_R$ the residue of $C$, and write it as $\res{C}$. Given a normal surface $X$ with isolated cyclic quotient singularities $\{X_{C_i} : i \in I\}$ there exists a $\Q$-Gorenstein deformation of $X$ such that the general fibre is a surface with isolated singularities $\{\res{C_i} : i \in I\}$ (Corollary~\ref{cor:deformation_to_residual}).

Let $P$ be a Fano polygon and let $X_P$ denote the corresponding toric Fano surface defined by the spanning fan $\Sigma$ of $P$. For a cone $C_i$ of $\Sigma$ with width $w_i$ and local index $\ell_i$, write $w_i = n_i \ell_i + \rho_i$ with $0 \leq\rho_i < \ell_i$. The \emph{singularity content} of $P$ is the pair $(n,\BB)$ where $n = \sum_i n_i$ and $\BB$ is the cyclically-ordered list $\left\{\res{C_i}\right\}_i$ with empty residues omitted. We compute the degree of $X_P$ in terms of the singularity content of $P$ (Proposition~\ref{prop:degree_formula}) and express the Hilbert series of $X_P$, in the style of~\cite{Reid85}, as the sum of a leading term controlling the order of growth followed by contributions from the elements of $\BB$ (Corollary~\ref{cor:hilb_series}). The singularity content of $P$ is invariant under mutation~\cite{ACGK12}.

%-------------------------------------------------------------------------------
\section{Singularity content of a cone}\label{sec:singularity_content_cone}
%-------------------------------------------------------------------------------
Let $N$ be a lattice of rank two, and consider a (strictly convex) two-dimensional cone $C\subset\NQ:=N\otimes_{\Z}\Q$. Let $u$ and $v$ be the primitive lattice vectors in $N$ defined by the rays of $C$. Define the \emph{width} $w\in\Z_{>0}$ of $C$ to be the lattice length of $v-u$, and the \emph{local index} $\ell\in\Z_{>0}$ of $C$ to be the lattice height of the line segment $uv$ above the origin.

\begin{notation}\label{notation:crepant}
Given $C$, $u$, and $v$ as above, and a non-negative integer $m$ such that $m \leq w/\ell$, we define a sequence of lattice points $v_0,v_1,\ldots,v_{n+1}$ on $uv$ as follows:
\begin{enumerate}
\item\label{item:crepant1}
$v_0 = u$ and $v_{n+1} = v$;
\item\label{item:crepant2}
$v_{i+1} - v_i$ is a non-negative scalar multiple of $v-u$, for $i \in \{0,1,\ldots,n\}$;
\item\label{item:crepant3}
$v_{i+1} - v_i$ has lattice length $\ell$ for $i \in \{0,\ldots,\widehat{m},\ldots,n\}$;
\item\label{item:crepant4}
$v_{m+1} - v_m$ has lattice length $\rho$, with $0 \leq \rho < \ell$.
\end{enumerate}
\end{notation}

The sequence $v_0,\ldots,v_{n+1}$ is uniquely determined by $m$ and the choice of $u$. Note that $w = n \ell + \rho$. We consider the partition of $C$ into subcones $C_i:=\scone{v_i,v_{i+1}}$, $0 \leq i \leq n$.

\begin{lemma}[\protect{\cite[Proof of Proposition~3.9]{AK13}}]\label{lem:height_width_singularity}
If the cone $C \subset \NQ$ has singularity type $\frac{1}{r}(a,b)$ then $w = \gcd{r,a+b}$ and $\ell= r/\gcd{r,a+b}$.
\end{lemma}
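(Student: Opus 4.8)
The plan is to argue inside the standard toric model of a cyclic quotient singularity. Assuming, as we may, that the presentation $\frac1r(a,b)$ is well-formed (equivalently $\gcd{r,a} = \gcd{r,b} = 1$, which is the condition making the ray generators primitive), take $N = \Z^2 + \Z\cdot\tfrac1r(a,b) \subset \NQ$ and let $C = \scone{e_1,e_2}$ with $e_1 = (1,0)$ and $e_2 = (0,1)$. Well-formedness ensures $e_1$ and $e_2$ are primitive in $N$, so we may set $u = e_1$ and $v = e_2$; it is standard that $X_C$ then has singularity type $\frac1r(a,b)$ and that $[N : \Z u + \Z v] = r$. The dual lattice is $M = \dual{N} = \{(p,q)\in\Z^2 : pa + qb \equiv 0 \modb{r}\}$, since $\psi \in M$ must pair integrally with $e_1$, $e_2$ and $\tfrac1r(a,b)$.

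First I would compute the local index $\ell$ using $M$. The segment $uv$ lies on the affine line $\{x+y=1\}$, whose normal direction is $(1,1)$, and $\ell$ equals $\langle\phi,u\rangle$ for the primitive $\phi\in M$ conormal to the segment. Writing $\phi = k(1,1)$, membership $\phi \in M$ requires $k(a+b) \equiv 0 \modb{r}$, whose least positive solution is $k = r/\gcd{r,a+b}$. Hence $\ell = \langle\phi,u\rangle = k = r/\gcd{r,a+b}$, as claimed.

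Next I would compute the width $w$ as the lattice length of $v - u = (-1,1)$, that is, the largest $w$ with $\tfrac1w(-1,1) \in N$. Unwinding membership in $N = \Z^2 + \Z\cdot\tfrac1r(a,b)$ forces first $w \mid r$, and then, after eliminating the integer coming from the $\tfrac1r(a,b)$ component and using $\gcd{r,a}=1$, shows that $\tfrac1w(-1,1)\in N$ holds exactly when $(r/w)(a+b) \equiv 0 \modb{r}$. Minimising $r/w$ subject to this divisibility gives $r/w = r/\gcd{r,a+b}$, hence $w = \gcd{r,a+b}$. As a consistency check, $w\ell = \gcd{r,a+b}\cdot\bigl(r/\gcd{r,a+b}\bigr) = r$, which is exactly $[N : \Z u + \Z v] = \abs{\det(u,v)}$, i.e.\ the normalized area of the triangle $\orig u v$ written as base $w$ times height $\ell$.

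The computations are elementary divisibility arguments, so the only real care needed is in fixing the lattice conventions — the choice of the overlattice $N$, the primitivity of $e_1,e_2$ under well-formedness, and the dual description of $M$ — after which both formulas fall out of a single $\gcd$ computation with $a+b$. A slightly slicker route would be to prove $w\ell = r$ first, by expressing the normalized area of $\orig u v$ as base times height in $N$, and then to compute only $\ell$ from $M$ and deduce $w = r/\ell$; I would likely present the $\ell$ computation in full and obtain $w$ this way, to avoid repeating near-identical arithmetic.
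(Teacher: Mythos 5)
Your argument is correct: the paper itself gives no proof of this lemma (it is imported from the proof of Proposition~3.9 in \cite{AK13}), and your computation of $\ell$ via the primitive conormal $k(1,1)\in M$ and of $w$ via the divisibility condition $(r/w)(a+b)\equiv 0 \modb{r}$, under the standing well-formedness assumption $\gcd{r,a}=\gcd{r,b}=1$, is exactly the standard direct calculation in the model $N=\Z^2+\Z\cdot\tfrac1r(a,b)$ that the cited source carries out. The only cosmetic point is that in your consistency check $w\ell=r$ the determinant $\abs{\det(u,v)}$ must be taken with respect to a basis of $N$ (not of $\Z^2$), where it indeed equals the index $[N:\Z u+\Z v]=r$.
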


\begin{prop}\label{prop:cone_decomposition}
Let $C\subset\NQ$ be a two-dimensional cone of singularity type $\frac{1}{r}(1,a-1)$. Let $u$,~$v$ be the primitive lattice vectors defined by the rays of $C$, ordered such that $u$,~$v$, and~$\frac{a-1}{r}u+\frac{1}{r}v$ generate $N$. Let $v_0,\ldots,v_{n+1}$ be as in Notation~\ref{notation:crepant}. Then:
\begin{enumerate}
\item\label{item:cone_decomposition1}
The lattice points $v_0,\ldots,v_{n+1}$ are primitive;
\item\label{item:cone_decomposition2}
The subcones $C_i$, $0 \leq i<m$, are of singularity type $\frac{1}{\ell^2}(1,\frac{\ell a}{w}-1)$;
\item\label{item:cone_decomposition3}
If $\rho\ne 0$ then the subcone $C_{m}$ is of singularity type $\frac{1}{\rho \ell}(1,\frac{\rho a}{w}-1)$;
\item\label{item:cone_decomposition4}
The subcones $C_i$, $m<i\leq n$, are of singularity type $\frac{1}{\ell^2}(1,\frac{\ell\bar{a}}{w}-1)$.
\end{enumerate}
Here $\bar{a}$ is an integer satisfying $(a-1)(\bar{a}-1) \equiv 1 \modb r$, and so exchanging the roles of $u$ and $v$ exchanges $a$ and $\bar{a}$. Note that the singularity type of $C_{m}$ depends only on $C$.
\end{prop}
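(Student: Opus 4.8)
The plan is to reduce to an explicit coordinate model and then read off each statement from two divisibility conditions.

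\emph{Normal form.} By Lemma~\ref{lem:height_width_singularity} we have $w=\gcd{r,a}$, $\ell=r/\gcd{r,a}$, and hence $w\ell=r$; in particular $w\mid a$, so $a':=a/w$ is an integer with $\gcd{a',\ell}=1$. First I would choose the basis of $N$ in which $t:=(v-u)/w$ is the first basis vector; since the segment $uv$ has lattice height $\ell$, in this basis $u=(u_x,\ell)$ and $v=(u_x+w,\ell)$ for some $u_x$ with $\gcd{u_x,\ell}=1$. The points of Notation~\ref{notation:crepant} are then $v_i=(u_x+i\ell,\ell)$ for $0\le i\le m$, $v_{m+1}=(u_x+m\ell+\rho,\ell)$, and $v_i=(u_x+(i-1)\ell+\rho,\ell)$ for $m<i\le n+1$. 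The requirement that $\tfrac{a-1}{r}u+\tfrac1r v$ lie in $N$ becomes the single congruence $a'u_x\equiv-1\modb{\ell}$, which is the only arithmetic input I will need.

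\emph{Primitivity and indices.} A point $(x,\ell)$ is primitive exactly when $\gcd{x,\ell}=1$. For $i\le m$ the first coordinate is $\equiv u_x\modb{\ell}$, and for $i>m$ it is $\equiv u_x+\rho\equiv u_x+w\modb{\ell}$; since $v_0=u$ and $v_{n+1}=v$ are primitive these gcds equal $1$, giving (i). For the index of $C_i$ I would compute $\det(v_i,v_{i+1})$, which equals $-\ell^2$ when $i\ne m$ and $-\rho\ell$ when $i=m$; subtracting the columns and using $\gcd{u_x,\ell}=1$ (resp. $\gcd{u_x+\rho,\ell}=1$) shows each quotient $N/\langle v_i,v_{i+1}\rangle$ is cyclic, so $C_i$ is a cyclic quotient singularity of order $\ell^2$ (resp. $\rho\ell$).

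\emph{Types (ii) and (iii).} To pin down the weight I would locate the generator $g=\tfrac{c}{r_i}v_i+\tfrac1{r_i}v_{i+1}\in N$. Its second coordinate forces $c\equiv-1$ modulo $\ell$ (for $i\ne m$) or modulo $\rho$ (for $i=m$); writing $c+1=k\ell$ (resp. $k\rho$) and imposing integrality of the first coordinate reduces, via $a'u_x\equiv-1\modb{\ell}$, to $k\equiv a'\modb{\ell}$. Reading off $c$ modulo $r_i$ then gives $c\equiv a'\ell-1=\tfrac{\ell a}{w}-1$ for $0\le i<m$ and $c\equiv a'\rho-1=\tfrac{\rho a}{w}-1$ for $i=m$, which are (ii) and (iii).

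\emph{Type (iv) and the final remark.} Here I would argue by symmetry rather than repeat the computation, since reversing the two generators of a cone leaves the associated affine variety, hence the singularity type, unchanged. Relabelling $(u,v)\mapsto(v,u)$ replaces the type by $\tfrac1r(1,\bar a-1)$ (the standard reversal of a cyclic quotient singularity, which is exactly the defining congruence for $\bar a$) and moves the width-$\rho$ subcone to index $n-m$; the cones $C_i$ with $i>m$ become the cones indexed by $j=n-i<n-m$ of the relabelled decomposition, to which part~(ii) applies and yields type $\tfrac1{\ell^2}(1,\tfrac{\ell\bar a}{w}-1)$. The same relabelling fixes $C_m$ while sending $a\mapsto\bar a$, so its two descriptions $\tfrac1{\rho\ell}(1,\tfrac{\rho a}{w}-1)$ and $\tfrac1{\rho\ell}(1,\tfrac{\rho\bar a}{w}-1)$ agree, proving that the type of $C_m$ depends only on $C$. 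I expect the one genuinely delicate point to be the orientation bookkeeping in (iv): a direct computation as in (ii) produces the weight $-\tfrac{\ell\bar a}{w}-1$, the \emph{inverse} of the stated one modulo $\ell^2$, and it is precisely the reversal-invariance of the singularity type that identifies the two.
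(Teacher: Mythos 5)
Your proof is correct, and for the main computations it travels the same road as the paper: put the cone in an explicit coordinate normal form, extract the single arithmetic condition from the hypothesis that $u$, $v$, $\frac{a-1}{r}u+\frac{1}{r}v$ generate $N$, compute the generator of each subcone's quotient group directly to get (ii) and (iii), and deduce (i) on the far side and (iv) by exchanging the roles of $u$ and $v$, which replaces $a$ by $\bar a$ and relabels the subdivision by $i\mapsto n-i$. The one genuinely different component is your treatment of the claim that the type of $C_m$ depends only on $C$. The paper proves the required equivalence of $\frac{1}{\rho\ell}\big(1,\frac{\rho a}{w}-1\big)$ and $\frac{1}{\rho\ell}\big(1,\frac{\rho\bar a}{w}-1\big)$ by explicitly verifying the congruence $\big(\frac{\rho a}{w}-1\big)\big(\frac{\rho\bar a}{w}-1\big)\equiv 1\modb{\rho\ell}$, which takes up roughly half of its proof and needs a case split according to whether $\gcd{r,a}\cdot\gcd{r,\bar a}$ is divisible by $r$. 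You instead observe that the two expressions arise from running the identical computation on the same geometric cone $C_m$ with the two possible orderings of its generators, so they must name isomorphic singularities; this is shorter, avoids the modular arithmetic entirely, and is a legitimate replacement since the singularity type of a fixed cone is well defined. Your closing caveat about orientation in (iv) --- that a direct computation in your coordinates yields the weight $-\frac{\ell\bar a}{w}-1$, which is the inverse modulo $\ell^2$ of the stated weight, and that one must invoke reversal-invariance to identify the two --- is accurate, and is precisely the bookkeeping the paper sidesteps by phrasing (iv) through the swap $u\leftrightarrow v$ rather than computing in the original frame.
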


\begin{proof}
Without loss of generality we may assume that $u = (0,1)$, that $v = (r,1-a)$, and that $m \ne 0$. The primitive vector in the direction $v-u$ is $(\alpha,\beta) := (\ell,{-a}/w)$. Thus $v_1 = (\alpha^2,1+\alpha \beta)$, and so $v_1$ is primitive. There exists a change of basis
sending $v_1$ to $(0,1)$ and leaving $(\alpha,\beta)$ unchanged. This change of basis sends $v_i$ to $v_{i-1}$ for each $1 \leq i \leq m$. It follows that the lattice points $v_i$, $1 \leq i \leq m$, are primitive, and that the cones $C_i$, $1 \leq i \leq m$, are
isomorphic. Since
\[
\textstyle
\frac{1}{\alpha^2}(\alpha^2,1+\alpha \beta) - \frac{1 + \alpha \beta}{\alpha^2} (0,1) = (1,0),
\]
we have that $C_1$ has singularity type $\frac{1}{\alpha^2}(1,{-1}-\alpha \beta) = \frac{1}{\ell^2}(1,\frac{\ell a}{w}-1)$. Since $r=w\ell$ (Lemma~\ref{lem:height_width_singularity}) we have that $\frac{\ell(a+kr)}{w}-1\equiv \frac{\ell a}{w}-1\modb{\ell^2}$ for any integer $k$, so that the singularity only depends on the equivalence class of $a$ modulo $r$. This proves~\eqref{item:cone_decomposition2}. Switching the roles of $u$ and $v$ proves~\eqref{item:cone_decomposition1} and~\eqref{item:cone_decomposition4}.

It remains to prove~\eqref{item:cone_decomposition3}. As before, we may assume that $u = (0,1)$ and $v = (r,1-a)$. Consider the change of basis described above. After applying this $m$ times, the cone $C_{m+1}$ has primitive generators $(0,1)$ and $(\rho \alpha, 1 + \rho \beta)$. Since
\[
\textstyle
\frac{1}{\rho \alpha}(\rho \alpha,1+\rho \beta) - \frac{1 + \rho \beta}{\rho \alpha} (0,1) = (1,0),
\]
we see that $C_{m+1}$ has singularity type $\frac{1}{\rho \alpha}(1,{-1}-\rho \beta) = \frac{1}{\rho \ell}(1,\frac{\rho a}{w}-1)$. Since $r/w=\ell$ (again by Lemma~\ref{lem:height_width_singularity}) we see that $\frac{\rho(a+kr)}{w}-1\equiv\frac{\rho a}{w}-1\modb{\rho\ell}$ for any integer $k$, hence the singularity only depends on $a$ modulo $r$.

Next, we need to show that~\eqref{item:cone_decomposition3} is well-defined: that is, that the  quotient singularities $\frac{1}{\rho \ell}(1,\frac{\rho a}{w}-1)$ and $\frac{1}{\rho \ell}(1,\frac{\rho\bar{a}}{w}-1)$ are equivalent. It is sufficient to show that
$$\left(\frac{\rho a}{w}-1\right)\left(\frac{\rho\bar{a}}{w}-1\right)\equiv 1\modb{\rho\ell}.$$
Let $k,c\in\Z_{\geq 0}$, $0\leq c<\rho\ell$ be such that
\begin{equation}\label{eq:cone_decomposition3_a}
\left(\frac{\rho a}{w}-1\right)\left(\frac{\rho\bar{a}}{w}-1\right)=k\rho\ell+c.
\end{equation}
From Lemma~\ref{lem:height_width_singularity} we see that $0\leq c<r$, and~\eqref{eq:cone_decomposition3_a} becomes
$$\left(a-1-\frac{nra}{d}\right)\left(\bar{a}-1-\frac{nr\bar{a}}{d}\right)=kr-\frac{knr^2}{d}+c,\quad\text{ where }d:=\gcd{r,a}\cdot\gcd{r,\bar{a}}.$$
Multiplying through by $d$ and reducing modulo $r$ we obtain $d(a-1)(\bar{a}-1)\equiv dc\modb{r}$. Suppose that $d\not\equiv 0\modb{r}$. Since $(a-1)(\bar{a}-1)\equiv 1\modb{r}$, we conclude that $c=1$.

Finally, suppose that $d\equiv 0\modb{r}$. Writing $a=a'\cdot\gcd{r,a}$ and $\bar{a}=\bar{a}'\cdot\gcd{r,\bar{a}}$, we obtain
$$1\equiv(a'\cdot\gcd{r,a}-1)(\bar{a}'\cdot\gcd{r,\bar{a}}-1)\equiv 1-a-\bar{a}\modb{r},$$
and hence $a\equiv-\bar{a}\modb{r}$. But this implies that $1\equiv(a-1)(-a-1)\equiv 1-a^2\modb{r}$ and so $a\mid r$. Hence $w=r$, $\ell=1$, and the singularity in~\eqref{item:cone_decomposition3} is equivalent to $\frac{1}{\rho}(1,\rho-1)$.
\end{proof}

Notice that the quantities $a/w$ and $\bar{a}/w$ appearing in Proposition~\ref{prop:cone_decomposition} are integers by Lemma~\ref{lem:height_width_singularity}.

\begin{definition}\label{def:sing_content_cone}
Let $C\subset\NQ$ be a cone of singularity type $\frac{1}{r}(1,a-1)$. Let $\ell$ and $w$ be as above, and write $w =n \ell + \rho$ with $0 \leq \rho < \ell$. The \emph{residue} of $C$ is given by
$$
\res{C}:=\left\{
\begin{array}{ll}
\frac{1}{\rho\ell}\big(1,\frac{\rho a}{w} - 1\big)&\text{ if }\rho\ne 0,\\
\emptyset&\text{ if }\rho=0.
\end{array}\right.
$$
The \emph{singularity content} of $C$ is the pair $\SC{C} := (n, \res{C})$.
\end{definition}

\begin{example}
Let $C$ be a cone corresponding to the singularity $\frac{1}{60}(1,23)$. Then $w=12$, $\ell=5$, and $\rho=2$. Setting $m=1$ we obtain a decomposition of $C$ into three subcones: $C_0$ of singularity type $\frac{1}{25}(1,9)$, $C_1$ of singularity type $\frac{1}{10}(1,3)$, and $C_2$ of singularity type $\frac{1}{25}(1,4)$. In particular, $\res{C}=\frac{1}{10}(1,3)$.
\end{example}

Recall that a $T$-singularity is a quotient surface singularity which admits a $\Q$-Gorenstein one-parameter smoothing; $T$-singularities correspond to cyclic quotient singularities of the form $\frac{1}{nd^2}(1,ndc-1)$, where $\gcd{d,c} = 1$~\cite[Proposition~3.10]{KS-B88}. We now show that $T$-singularities are precisely the cyclic quotient singularities with empty residue.

\begin{cor}\label{cor:tsing_decomposition}
Let $C\subset\NQ$ be a cone and let $w,\ell$ be as above. The following are equivalent:
\begin{enumerate}
\item\label{item:empty}
$\res{C} = \varnothing$;
\item\label{item:division}
There exists an integer $n$ such that $w = n\ell$;
\item\label{item:s^t}
There exists a crepant subdivision of $C$ into $n$ cones of singularity type $\frac{1}{\ell^2}(1,\ell c-1)$, $\gcd{\ell,c}=1$;
\item\label{item:tsing}
$C$ corresponds to a $T$-singularity of type $\frac{1}{n \ell^2}(1,n\ell c-1)$, $\gcd{\ell,c} = 1$.
\end{enumerate}
\end{cor}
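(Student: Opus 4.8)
The plan is to show that each of~\eqref{item:empty}, \eqref{item:s^t}, and~\eqref{item:tsing} is equivalent to~\eqref{item:division}, since the latter is purely arithmetic and Lemma~\ref{lem:height_width_singularity} converts it directly into statements about the singularity type. Throughout I write the type of $C$ as $\frac{1}{r}(1,a-1)$, so that $w = \gcd{r,a}$ and $r = w\ell$ by Lemma~\ref{lem:height_width_singularity}. With this in hand, \eqref{item:empty}$\Leftrightarrow$\eqref{item:division} is immediate from Definition~\ref{def:sing_content_cone}: since $w = n\ell + \rho$ with $0\leq\rho<\ell$, one has $\res{C}=\emptyset$ exactly when $\rho = 0$, that is, exactly when $\ell\mid w$.

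For \eqref{item:division}$\Leftrightarrow$\eqref{item:tsing} I would argue entirely through Lemma~\ref{lem:height_width_singularity}. Assuming~\eqref{item:division}, so $w = n\ell$ and hence $r = n\ell^2$, the fact that $w = \gcd{r,a}$ divides $a$ lets me set $c := a/w$, giving $a = n\ell c$. Then $\gcd{r,a} = \gcd{n\ell^2,n\ell c} = n\ell\,\gcd{\ell,c}$, and comparison with $\gcd{r,a} = w = n\ell$ forces $\gcd{\ell,c}=1$; thus $C$ has type $\frac{1}{n\ell^2}(1,n\ell c-1)$ with $\gcd{\ell,c}=1$, which is a $T$-singularity by~\cite[Proposition~3.10]{KS-B88}. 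The converse is the same computation run backwards: applying Lemma~\ref{lem:height_width_singularity} to $\frac{1}{n\ell^2}(1,n\ell c-1)$ gives $w = \gcd{n\ell^2,n\ell c} = n\ell$ and local index $r/w = \ell$, whence $w = n\ell$.

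For \eqref{item:division}$\Leftrightarrow$\eqref{item:s^t} the forward direction uses Proposition~\ref{prop:cone_decomposition} with $m = n$. Since $\rho = 0$ the exceptional cone $C_n$ collapses, and parts~\eqref{item:cone_decomposition1} and~\eqref{item:cone_decomposition2} supply primitive points $v_0,\ldots,v_n$ on $uv$ dividing $C$ into $n$ subcones, each of type $\frac{1}{\ell^2}(1,\frac{\ell a}{w}-1) = \frac{1}{\ell^2}(1,\ell c-1)$ with $c = a/w$ and $\gcd{\ell,c}=1$ as above. This subdivision is crepant because each new ray generator lies on the segment $uv$: a ray through a primitive point $\lambda u + \mu v$ of the cone has discrepancy $\lambda+\mu-1$, which vanishes precisely on the affine line through $u$ and $v$. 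Conversely, each subcone of type $\frac{1}{\ell^2}(1,\ell c-1)$ has width $\gcd{\ell^2,\ell c}=\ell$ and local index $\ell$; crepancy places all the subdivision rays on $uv$, so their widths sum to $w = n\ell$ and their common local index $\ell$ is the local index of $C$, giving~\eqref{item:division}.

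The arithmetic here is short once Lemma~\ref{lem:height_width_singularity} is in hand; the two points that need care are the deduction $\gcd{\ell,c}=1$, which I extract from $\gcd{r,a}=w$ rather than from any direct coprimality statement, and the meaning of ``crepant''. I expect the crepancy bookkeeping to be the main thing to pin down: justifying that all subdivision rays lie on $uv$, and that setting $m = n$ in Notation~\ref{notation:crepant} genuinely removes the residual cone so that the $n$ remaining subcones share the single type $\frac{1}{\ell^2}(1,\ell c-1)$ demanded by~\eqref{item:s^t}. The remaining implications are a direct consequence of the Lemma.
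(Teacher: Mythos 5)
Your proof is correct and takes essentially the same route as the paper's: the equivalence of (i) and (ii) by definition, Proposition~\ref{prop:cone_decomposition} for passing from (ii) to the crepant subdivision in (iii), Lemma~\ref{lem:height_width_singularity} for the gcd computations tying (iii) and (iv) to the width and local index, and the Koll\'ar--Shepherd-Barron characterisation of $T$-singularities. The only differences are organisational --- you hub all the implications through (ii), whereas the paper closes a cycle by proving (iii) implies (iv) directly --- and your explicit discrepancy computation for the crepancy claim spells out a step the paper leaves implicit.
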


\begin{proof}
\eqref{item:empty} and~\eqref{item:division} are equivalent by definition. \eqref{item:s^t} follows from~\eqref{item:division} by Proposition~\ref{prop:cone_decomposition}, and~\eqref{item:empty} follows from~\eqref{item:tsing} by Lemma~\ref{lem:height_width_singularity}. Assume~\eqref{item:s^t} and let the singularity type of $C$ be $\frac{1}{R}(1,A-1)$. The width of $C$ is $n$ times the width of a given subcone. Since $\gcd{\ell,c} = 1$, Lemma~\ref{lem:height_width_singularity} implies that
$$\gcd{R,A} = w = n\cdot\gcd{\ell^2,\ell c} = n\ell.$$
The local index of a given subcone coincides, by construction, with the local index of $C$. By Lemma~\ref{lem:height_width_singularity} we see that
$$R = \ell\cdot\gcd{R,A} = n\ell^2.$$
Finally, Proposition~\ref{prop:cone_decomposition} gives that $\ell A/w=\ell c$, hence $A=n\ell c$, and so~\eqref{item:s^t} implies~\eqref{item:tsing}.
\end{proof}

%-------------------------------------------------------------------------------
\subsection{Residue and deformation}\label{sec:residue_deformation}
%-------------------------------------------------------------------------------
Define the \emph{residue} of a cyclic quotient singularity $\sigma$ to be the residue of $C$, where $C$ is any cone of singularity type $\sigma$. The residue encodes information about $\Q$-Gorenstein deformations of $\sigma$.

\begin{prop}\label{prop:deformation_to_residual}
A cyclic quotient singularity $\sigma$ admits a $\Q$-Gorenstein smoothing if and only if $\res{\sigma}=\emptyset$. Otherwise there exists a $\Q$-Gorenstein deformation of $\sigma$ such that the general fibre is a cyclic quotient singularity of type $\res{\sigma}$.
\end{prop}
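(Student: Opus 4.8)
The plan is to pass to the analytic germ and to handle both assertions uniformly by lifting to the canonical (index-one) cover. Write $\sigma = \frac{1}{r}(1,a-1)$ with local index $\ell$, so that $r = w\ell$ by Lemma~\ref{lem:height_width_singularity}, and recall from Corollary~\ref{cor:tsing_decomposition} that $\res{\sigma} = \emptyset$ exactly when $\rho = 0$, i.e. when $\sigma$ is a $T$-singularity. The biconditional in the first sentence then follows from the classification of Koll\'ar--Shepherd-Barron~\cite{KS-B88}: a cyclic quotient surface singularity admits a $\Q$-Gorenstein smoothing precisely when it is a $T$-singularity. It remains to produce, when $\rho \neq 0$, a $\Q$-Gorenstein deformation whose general fibre has type $\res{\sigma}$; the construction I have in mind will also realise the smoothing when $\rho = 0$, so I would treat both cases at once.

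First I would take the canonical cover. Since $\sigma$ has index $\ell$, its index-one cover $\hat X$ is a Gorenstein cyclic quotient surface singularity of order $r/\ell = w$, hence $\hat X \cong A_{w-1} = \{xy = z^w\}$, and $\sigma = \hat X/\mu_\ell$ for a suitable diagonal action $x \mapsto \zeta^p x$, $y \mapsto \zeta^q y$, $z \mapsto \zeta^s z$ (with $p+q \equiv ws \pmod{\ell}$, so that the equation is semi-invariant). The principle I would invoke, standard in this setting, is that $\Q$-Gorenstein deformations of $\sigma$ are exactly those induced by $\mu_\ell$-equivariant deformations of $\hat X$. The miniversal deformation of $A_{w-1}$ is $\{xy = z^w + \sum_{i=0}^{w-2} t_i z^i\}$ over $\A^{w-1}$, and $\mu_\ell$ acts on its base; the $\Q$-Gorenstein directions are those dual to the monomials $z^i$ of the same $\mu_\ell$-weight as $z^w$. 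A weight computation should show these are precisely the $z^i$ with $i \equiv w \equiv \rho \pmod{\ell}$.

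Granting this, the general invariant member is $xy = z^{\rho}g(z)$ with $g(z) = G(z^{\ell})$ for a generic monic $G$ of degree $n$; generically $G$ has $n$ distinct nonzero roots, so $g$ has $n\ell$ distinct nonzero roots and the only singularity of $\hat X_t = \{xy = z^{\rho}g(z)\}$ is an $A_{\rho-1}$ point at $z=0$ (and none when $\rho = 0$). Since $\mu_\ell$ permutes the nonzero roots freely and fixes only the origin, the general fibre of the descended family is $A_{\rho-1}/\mu_\ell$, a cyclic quotient singularity of order $\rho\ell$ (respectively smooth when $\rho = 0$, recovering the smoothing of a $T$-singularity). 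The main obstacle, and the one genuinely computational step, is the explicit determination of the weights $(p,q,s)$ from the presentation $\frac{1}{r}(1,a-1)$: this is needed both to confirm that the invariant exponents are those congruent to $\rho$ modulo $\ell$ and to identify $A_{\rho-1}/\mu_\ell$ with the prescribed residue $\res{\sigma} = \frac{1}{\rho\ell}\big(1,\frac{\rho a}{w}-1\big)$. I expect this to reduce, via Lemma~\ref{lem:height_width_singularity} and the congruence bookkeeping already carried out in Proposition~\ref{prop:cone_decomposition}, to a short calculation.
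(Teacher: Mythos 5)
Your proposal is correct and follows essentially the same route as the paper: both pass to the canonical cover $A_{w-1}=\{xy=z^{w}\}$ of $\sigma=\frac{1}{r}(1,a-1)$, deform equivariantly by monomials $z^{i}$ with $i\equiv\rho\pmod{\ell}$, and identify the general fibre as the $\mu_{\ell}$-quotient of an $A_{\rho-1}$ point at the origin. The paper simply writes down the one-parameter family $(xy-z^{w}+tz^{\rho})\subset\frac{1}{\ell}\bigl(1,\frac{\rho a}{w}-1,\frac{a}{w}\bigr)\times\A^{1}_{t}$ with the weights made explicit, which is precisely the short calculation you defer at the end.
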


\begin{proof}
By definition, $\sigma$ admits a $\Q$-Gorenstein smoothing if and only if it is a $T$-singularity. Thus the first statement follows from Corollary~\ref{cor:tsing_decomposition}. Assume $\sigma$ is not a $T$-singularity and let $\omega$,~$\ell$, and~$\rho$ be as above. By Corollary~\ref{cor:tsing_decomposition} we must have $\rho>0$. Now $\sigma=\frac{1}{r}(1,a-1)$ has index $\ell$ and canonical cover
\[
\textstyle
\frac{1}{\omega}(1,-1)=(xy - z^{\omega})\subset\A^3_{x,y,z}.
\]
Taking the quotient by the cyclic group $\mu_{\ell}$, and noting that $\omega\equiv\rho\modb{\ell}$, we have:
\[
\textstyle
\frac{1}{r}(1,a-1) = (xy - z^\omega)\subset\frac{1}{\ell}(1,\frac{\rho a}{\omega} - 1,\frac{a}{w}).
\]
A $\Q$-Gorenstein deformation is given by
\[
\textstyle
(xy - z^{\omega} + tz^{\rho}) \subset \frac{1}{\ell}\left(1,\frac{\rho a}{\omega} - 1, \frac{a}{\omega}\right) \times \A^1_t,
\]
and the general fibre of this family is the cyclic quotient singularity $\frac{1}{\rho\ell}(1,\frac{\rho a}{\omega} - 1)$.
\end{proof}

By combining Proposition~\ref{prop:deformation_to_residual} above with the proof of Proposition~3.4 and the Remark immediately following it from~\cite{Tzi05}, which tells us that there are no local-to-global obstructions, we obtain:

\begin{cor}\label{cor:deformation_to_residual}
Let $H$ be a normal surface over $\C$ with isolated cyclic quotient singularities. There exists a global $\Q$-Gorenstein smoothing of $H$ to a surface $H^\mathrm{res}$ with isolated singularities such that $\Sing{H^\mathrm{res}}=\{\res{\sigma}\mid\sigma\in\Sing{H},~\res{\sigma}\neq\emptyset\}$.
\end{cor}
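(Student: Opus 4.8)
The plan is to produce a single global family by gluing the local $\Q$-Gorenstein deformations of Proposition~\ref{prop:deformation_to_residual}, using the fact that for surfaces there are no local-to-global obstructions to $\Q$-Gorenstein deformations.

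First I would fix the local data. Since $H$ is normal with isolated singularities, each point $x\in\Sing{H}$ carries a cyclic quotient singularity $\sigma_x$, and Proposition~\ref{prop:deformation_to_residual} supplies a one-parameter $\Q$-Gorenstein deformation of the germ at $x$ whose general fibre is the cyclic quotient singularity $\res{\sigma_x}$ when $\res{\sigma_x}\neq\emptyset$, and is smooth when $\res{\sigma_x}=\emptyset$. These germs are the local deformations I wish to realise simultaneously by one global family.

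Next I would relate local and global deformations. Working in the $\Q$-Gorenstein deformation functor (equivalently, deforming the canonical, or index-one, covers equivariantly), write $\mathcal{T}^0_{\mathrm{QG}}$ for the associated tangent sheaf and $\mathcal{T}^1_{\mathrm{QG}}$ for the sheaf of local $\Q$-Gorenstein deformations; as the singularities are isolated, $\mathcal{T}^1_{\mathrm{QG}}$ is a skyscraper supported on $\Sing{H}$. The five-term exact sequence of the local-to-global spectral sequence then reads
$$H^1(H,\mathcal{T}^0_{\mathrm{QG}})\longrightarrow T^1_{\mathrm{QG}}(H)\longrightarrow\bigoplus_{x\in\Sing{H}}T^1_{\mathrm{QG}}(\sigma_x)\longrightarrow H^2(H,\mathcal{T}^0_{\mathrm{QG}}),$$
in which the central map sends a global first-order $\Q$-Gorenstein deformation to its collection of germs at the singular points. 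Surjectivity of this map, which realises the prescribed local deformations to first order, holds once the obstruction in $H^2(H,\mathcal{T}^0_{\mathrm{QG}})$ vanishes; one further needs the resulting class to lift to all orders.

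This is precisely the input I would import from~\cite{Tzi05}: the proof of Proposition~3.4 there, and the Remark following it, establish that a normal surface with quotient singularities has no local-to-global obstructions to $\Q$-Gorenstein deformations, so that every prescribed collection of local $\Q$-Gorenstein deformations is induced by a global one and the corresponding formal deformation is unobstructed. Feeding in the local models above and restricting to a one-parameter subfamily along which each germ undergoes the deformation of Proposition~\ref{prop:deformation_to_residual}, I obtain a global $\Q$-Gorenstein deformation of $H$; its general fibre $H^{\mathrm{res}}$ has isolated singular germs equal to the non-empty residues, giving $\Sing{H^{\mathrm{res}}}=\{\res{\sigma}\mid\sigma\in\Sing{H},\ \res{\sigma}\neq\emptyset\}$. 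The main obstacle is exactly this gluing-and-lifting step, namely the vanishing of $H^2(H,\mathcal{T}^0_{\mathrm{QG}})$ and of the higher obstructions; this is why I would rely on Tziolas's results rather than argue it from scratch, after which the conclusion follows by bookkeeping with the explicit local deformations.
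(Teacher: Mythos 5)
Your proposal is correct and follows the same route as the paper: the paper likewise obtains the global smoothing by combining the local deformations of Proposition~\ref{prop:deformation_to_residual} with the absence of local-to-global obstructions established in the proof of Proposition~3.4 of~\cite{Tzi05} and the remark following it. Your expansion of the gluing step via the local-to-global exact sequence is a faithful unpacking of what the paper leaves implicit, not a different argument.
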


%-------------------------------------------------------------------------------
\section{Singularity content of a complete toric surface}\label{sec:content_toric_surface}
%-------------------------------------------------------------------------------
\begin{definition}\label{def:sing_content_fan}
Let $\Sigma$ be a complete fan in $\NQ$ with two-dimensional cones $C_1,\ldots,C_m$, numbered cyclically, with $\SC{C_i}=(n_i,\res{C_i})$. The \emph{singularity content} of the corresponding toric surface $X_\Sigma$ is
$$\SC{X_{\Sigma}}:=(n,\BB),$$
where $n:=\sum_{i=0}^m n_i$ and $\BB$ is the cyclically ordered list $\{\res{C_1},\ldots,\res{C_m}\}$, with the empty residues $\res{C_i}=\emptyset$ omitted. We call $\BB$ the \emph{residual basket} of $X_{\Sigma}$.
\end{definition}

\begin{notation}\label{notation:continued_fraction}
We recall some standard facts about toric surfaces; see for instance~\cite{Ful93}. Let $X$ be a toric surface with singularity $\frac{1}{r}(1,a-1)$. Let $[b_1,\ldots,b_k]$ denote the Hirzebuch--Jung continued fraction expansion of $r/(a-1)$, having length $k\in\Z_{>0}$. For $i\in\{1,\ldots,k\}$, define $\alpha_i,\beta_i\in\Z_{>0}$ as follows: Set $\alpha_1=\beta_k=1$ and set
\begin{eqnarray*}
\alpha_i/\alpha_{i-1}&:=&[b_{i-1},\ldots,b_1],\quad 2 \leq i\leq k,\\
\beta_i/\beta_{i+1}&:=&[b_{i+1},\ldots,b_k],\quad 1\leq i\leq k-1.
\end{eqnarray*}
If $\pi:\widetilde{X}\rightarrow X$ is a minimal resolution then
$$K_{\widetilde{X}} = \pi^*K_X + \sum_{i=1}^k d_iE_i,$$
where $E_i^2=-b_i$ and $d_i=-1+(\alpha_i+\beta_i)/r$ is the discrepancy.
\end{notation}

\begin{prop}\label{prop:degree_formula}
Let $X$ be a complete toric surface with singularity content $(n,\BB)$. Then
$$
K_X^2=12-n-\sum_{\sigma\in\BB}A(\sigma),\quad\text{ where }A(\sigma):=k_\sigma+1-\sum_{i=1}^{k_\sigma}d_i^2b_i+2\sum_{i=1}^{k_{\sigma}-1}d_i d_{i+1}.
$$
\end{prop}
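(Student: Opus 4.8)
The plan is to compute $K_X^2$ from the minimal resolution $\pi\colon\widetilde{X}\to X$ and Noether's formula, and then to match the resulting expression term by term with the singularity content. Since $\widetilde{X}$ is a smooth complete toric surface with $\chi(\mathcal{O})=1$, Noether gives $K_{\widetilde{X}}^2=12-e(\widetilde{X})=12-\widetilde{m}$, where $\widetilde{m}$ is the number of rays of the refined fan. Writing $K_{\widetilde{X}}=\pi^*K_X+\sum_i d_i E_i$ and using $\pi^*K_X\cdot E_i=0$ yields $K_X^2=K_{\widetilde{X}}^2-(\sum_i d_i E_i)^2$. Over a cone $C$ of type $\frac{1}{r}(1,a-1)$ the exceptional locus is a chain with $E_i^2=-b_i$, $E_iE_{i+1}=1$, and exceptional loci of distinct cones are disjoint (the boundary divisors $D_{u_i}$ are not $\pi$-exceptional), so $(\sum_{E\subset C}d_iE_i)^2=-\sum_i d_i^2 b_i+2\sum_i d_i d_{i+1}=A(\sigma_C)-k_{\sigma_C}-1$, with $\sigma_C$ the singularity type of $C$. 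Using $\widetilde{m}=m+\sum_C k_{\sigma_C}$ and assembling over all two-dimensional cones (smooth cones contributing $A=1$) gives $K_X^2=12-\sum_C A(\sigma_C)$. It therefore suffices to prove the per-cone identity $A(\sigma_C)=n_C+A(\res{C})$ with the convention $A(\emptyset):=0$, after which summing and using $n=\sum_C n_C$ finishes the proof.

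To prove this identity I first observe that $A(C):=1+k+(\sum_i d_i E_i)^2$ is independent of the toric resolution used to compute it. This follows from the displayed formula itself: for \emph{any} resolution $Y\to X$ one has $12-K_X^2=\sum_C A(C)^Y$, summed over the cones of the fixed complete surface $X$; varying the resolution over a single cone while fixing it over all other cones leaves the intrinsic quantity $K_X^2$ and every other summand unchanged, and hence leaves $A(C)^Y$ unchanged. Next, by Proposition~\ref{prop:cone_decomposition} the rays $v_1,\dots,v_n$ subdividing $C$ lie on the segment $uv$, i.e.\ on the locus where the piecewise-linear support function of $-K_X$ takes the value $1$; thus each $D_{v_j}$ has discrepancy $0$ and the subdivision $C=\bigcup_{j=0}^n C_j$ is crepant. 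Resolving each $C_j$ minimally produces a (generally non-minimal) resolution of $C$ in which the divisors $D_{v_j}$ carry coefficient $0$; consequently the cross terms of $(\sum_i d_i E_i)^2$ between different subcones drop out, while the discrepancies and intersection numbers of the curves interior to a subcone $C_j$ agree with those of its stand-alone minimal resolution (the former because $X'\to X$ is crepant, the latter because these are local to $C_j$). Resolution independence then yields additivity: $A(C)=\sum_{j=0}^n A(C_j)$.

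By Proposition~\ref{prop:cone_decomposition} the subcones consist of $n$ elementary $T$-cones together with the residual cone (present exactly when $\rho\ne 0$), so additivity reduces the per-cone identity to the single claim that an elementary $T$-singularity $\frac{1}{\ell^2}(1,\ell c-1)$ has $A=1$. I would establish this geometrically: realise it as the unique singular point of a complete toric surface $Y$ whose other cones are smooth, so that $12-K_Y^2=A+(\#\mathrm{cones}-1)$. Its residue being empty, Corollary~\ref{cor:deformation_to_residual} gives a $\Q$-Gorenstein smoothing of $Y$ to a smooth surface $Y'$ with $K_{Y'}^2=K_Y^2$ and $\chi(\mathcal{O}_{Y'})=1$; since the Milnor fibre of the smoothing of a Wahl singularity is a rational homology ball, $e(Y')=e(Y)=\#\mathrm{cones}$, and Noether on $Y'$ gives $K_{Y'}^2=12-\#\mathrm{cones}$. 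Comparing the two expressions forces $A=1$. (Alternatively, $A=1$ can be checked by a direct Hirzebruch--Jung computation, using the recursive structure of the continued fraction of $\ell^2/(\ell c-1)$.)

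Combining these, $A(\sigma_C)=\sum_j A(C_j)=n_C\cdot 1+A(\res{C})=n_C+A(\res{C})$, and summing over the cones of $\Sigma$ gives $K_X^2=12-\sum_C\bigl(n_C+A(\res{C})\bigr)=12-n-\sum_{\sigma\in\BB}A(\sigma)$, as claimed. The step I expect to be the main obstacle is the bookkeeping in the additivity argument: one must verify carefully that assembling the stand-alone minimal resolutions of the subcones into a resolution of the whole cone alters neither the individual discrepancies nor the relevant self- and adjacent intersection numbers, and that it is precisely the crepancy of the dividing rays $v_j$ (their discrepancy being $0$) that makes the off-diagonal contributions to $(\sum_i d_i E_i)^2$ vanish. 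The resolution-independence lemma is what licenses comparing the minimal resolution of $C$ with this assembled, non-minimal one.
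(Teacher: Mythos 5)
Your argument is correct, but it is organized differently from the paper's. The paper never passes to the full minimal resolution of $X$: it splits each cone crepantly into its $T$-part and its residue, resolves only the residues, and applies the Noether formula for surfaces with $T$-singularities from~\cite[Proposition~2.6]{HP10}, $K_Y^2+\rho_Y+\sum\mu_\sigma=10$, to the resulting surface $Y$; the Milnor numbers $n_i-1$ of the retained $T$-singularities then produce the term $-n$ directly. You instead work on the full minimal resolution with the classical Noether formula, isolate the local invariant $A(C)=1+k+\bigl(\sum_i d_iE_i\bigr)^2$, prove it is resolution-independent and additive over the crepant subdivision of Proposition~\ref{prop:cone_decomposition}, and reduce to showing $A=1$ for an elementary $T$-singularity. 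That last step is precisely what replaces the citation of~\cite{HP10}: your deformation-theoretic verification (constancy of $K^2$ and $\chi(\mathcal{O})$ in a $\Q$-Gorenstein family, plus the rational-homology-ball Milnor fibre) is essentially a re-derivation of the needed case of the Hacking--Prokhorov formula, while your alternative continued-fraction check would make the proof purely combinatorial. What your route buys is the clean per-cone identity $A(\sigma_C)=n_C+A(\res{C})$; what it costs is the additivity bookkeeping, which you have identified as the delicate point and handled correctly (zero discrepancy of the dividing rays, disjointness of exceptional chains of distinct subcones, and locality of the discrepancies and self-intersections).
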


\begin{proof}
Let $\Sigma$ be the fan in $\NQ$ of $X$. If $C\in\Sigma$ is a two-dimensional cone whose rays are generated by the primitive lattice vectors~$u$ and~$v$ then, possibly by adding an extra ray through a primitive lattice vector on the line segment $uv$, we can partition $C$ as $C=S\cup R_C$, where $S$ is a (possibly smooth) $T$-singularity or $S=\emptyset$, and $R_C=\res{C}$. Repeating this construction for all two-dimensional cones of $\Sigma$ gives a new fan $\widetilde{\Sigma}$ in $\NQ$. If $\widetilde{X}$ is the toric variety corresponding to $\widetilde{\Sigma}$ then the natural morphism $\widetilde{X} \to X$ is crepant. In particular $K_{\widetilde{X}}^2 = K_X^2$. Notice that $\SC{X} = (n, \BB) = \mathrm{SC}(\widetilde{X})$.

By resolving singularities on all the nonempty cones $R_{C}$, we obtain a morphism $Y \to \widetilde{X}$ where the toric surface $Y$ (whose fan we denote $\Sigma_{Y}$) has only $T$-singularities. Thus by Noether's formula~\cite[Proposition 2.6]{HP10}
\begin{equation}\label{eq:noether_formula}
K_Y^2+\rho_Y+\sum_{\sigma\in\Sing{Y}}{\mu_\sigma}=10,
\end{equation}
where $\rho_Y$ is the Picard rank of $Y$, and $\mu_\sigma$ denotes the Milnor number of $\sigma$. But $\rho_Y+2$ is equal to the number of two-dimensional cones in $\Sigma_Y$, and the Milnor number of a $T$-singularity $\frac{1}{nd^2}(1,ndc-1)$ equals $n-1$, hence
\begin{equation}\label{eq:noether_combinatorics}
\rho_Y+\sum_{\sigma\in\Sing{Y}}{\mu_\sigma}=-2+n+\sum_{\sigma\in\BB}(k_{\sigma} +1),
\end{equation}
where $k_{\sigma}$ denotes the length of the Hirzebuch--Jung continued fraction expansion $[b_1,\ldots,b_{k_{\sigma}}]$ of $\sigma\in\BB$. With notation as in Notation~\ref{notation:continued_fraction},
\begin{equation}\label{eq:degree_combinatorics}
K_Y^2 = K_{X}^2 + \sum_{\sigma \in \BB}\left(-\sum_{i = 1}^{k_{\sigma}}d_i^2b_i + 2\sum_{i=1}^{k_{\sigma}-1}d_id_{i+1}\right).
\end{equation}
Substituting~\eqref{eq:noether_combinatorics} and~\eqref{eq:degree_combinatorics} into~\eqref{eq:noether_formula} gives the desired formula.
\end{proof}

\begin{remark}
If $X$ has only $T$-singularities, or equivalently if $\BB=\emptyset$, then Proposition~\ref{prop:degree_formula} gives $K_X^2=12-n$.
\end{remark}

The $m$-th Dedekind sum, $m\in\Z_{\geq 0}$, of the cyclic quotient singularity $\frac{1}{r}(a,b)$ is
\[
\delta_m:=\frac{1}{r}\sum\frac{\varepsilon^m}{(1-\varepsilon^a)(1-\varepsilon^b)},
\]
where the summation is taken over those $\varepsilon\in\mu_r$ satisfying $\varepsilon^a\neq 1$ and $\varepsilon^b\neq 1$. By Proposition~\ref{prop:degree_formula} and~\cite[\S8]{Reid85} we obtain an expression for the Hilbert series of $X$ in terms of its singularity content:

\begin{cor}\label{cor:hilb_series}
Let $X$ be a complete toric surface with singularity content $(n,\BB)$. Then the Hilbert series of $X$ admits a decomposition
$$
\mathrm{Hilb}(X,-K_X)=\frac{1 + (K_X^2 - 2)t + t^2}{(1-t)^3} + \sum_{\sigma \in \BB}Q_{\sigma}(t), \quad\text{where } Q_{\frac{1}{r}(a,b)}:=\frac{\sum_{i = 0}^{r-1}(\delta_{(a+b)i} - \delta_0)t^i}{1-t^r}.
$$
\end{cor}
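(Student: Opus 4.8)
The plan is to derive the formula from the orbifold Riemann--Roch theorem of \cite[\S8]{Reid85}, applied not to $X$ directly but to the crepant toric model constructed in the proof of Proposition~\ref{prop:degree_formula}. First I would record the elementary normalisations: since a complete toric surface is rational we have $\chi(\mathcal{O}_X)=1$, and I interpret the left-hand side as $\mathrm{Hilb}(X,-K_X)=\sum_{d\geq 0}\chi(X,-dK_X)t^d$, which coincides with $\sum_{d\geq 0}h^0(X,-dK_X)t^d$ whenever $-K_X$ is nef and big (in particular when $X=X_P$ is a toric Fano) by Demazure vanishing. Orbifold Riemann--Roch then gives
$$\chi(X,-dK_X)=1+\tfrac{1}{2}d(d+1)K_X^2+\sum_{Q\in\Sing{X}}\bigl(\delta^{Q}_{(a_Q+b_Q)d}-\delta^{Q}_{0}\bigr),$$
where $Q$ ranges over the cyclic quotient singularities $\tfrac{1}{r_Q}(a_Q,b_Q)$ of $X$ and $\delta^{Q}_{\bullet}$ denotes their Dedekind sums; the weight $a_Q+b_Q$ records the local character through which $-K_X$ acts, and subtracting $\delta^{Q}_0$ normalises the $d=0$ term so that $\chi(\mathcal{O}_X)=1$.

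Next I would sum over $d$. The polynomial part is a routine geometric-series computation: from $\sum_{d\geq 0}t^d=(1-t)^{-1}$ and $\sum_{d\geq 0}\tfrac12 d(d+1)t^d=t(1-t)^{-3}$ the contribution of $1+\tfrac12 d(d+1)K_X^2$ is exactly $\bigl(1+(K_X^2-2)t+t^2\bigr)(1-t)^{-3}$, producing the asserted leading term. Because $d\mapsto \delta^{Q}_{(a_Q+b_Q)d}$ is periodic of period $r_Q$, each singularity contributes $\sum_{d\geq 0}\bigl(\delta^{Q}_{(a_Q+b_Q)d}-\delta^{Q}_0\bigr)t^d=Q_{\sigma_Q}(t)$, precisely the rational function $Q_{\frac{1}{r}(a,b)}$ of the statement. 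At this point I have the claimed shape of the answer, but with the sum taken over all of $\Sing{X}$ rather than over the residual basket $\BB$.

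The reduction to $\BB$ is the heart of the matter, and here I would reuse the crepant toric model $\pi\colon\widetilde{X}\to X$ from the proof of Proposition~\ref{prop:degree_formula}: each two-dimensional cone of the fan is subdivided as $C=S\cup R_C$ with $S$ a (possibly smooth) $T$-singularity and $R_C=\res{C}$, so that $\Sing{\widetilde{X}}$ consists exactly of a collection of $T$-singularities together with the residues comprising $\BB$, while $K_{\widetilde{X}}^2=K_X^2$. Since $\pi$ is crepant, $-dK_{\widetilde{X}}=\pi^*(-dK_X)$, and crepant morphisms preserve pluri-anticanonical sections (equivalently $\pi_*\mathcal{O}_{\widetilde{X}}=\mathcal{O}_X$ with the projection formula), so $\mathrm{Hilb}(\widetilde{X},-K_{\widetilde{X}})=\mathrm{Hilb}(X,-K_X)$. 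Applying the formula of the previous two paragraphs to $\widetilde{X}$ therefore reduces the whole corollary to the single vanishing statement $Q_\sigma(t)=0$ for every $T$-singularity $\sigma$.

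I expect this last vanishing to be the main obstacle. The cleanest argument is deformation-theoretic: by Proposition~\ref{prop:deformation_to_residual} a $T$-singularity admits a $\Q$-Gorenstein smoothing, and since such a deformation preserves both $K^2$ and each $\chi(-dK)$ while replacing the singular point by a smooth one (which carries no Dedekind correction), the local contribution $\delta^{\sigma}_{(a+b)d}-\delta^{\sigma}_0$ must vanish for all $d$, giving $Q_\sigma=0$; alternatively one verifies the Dedekind-sum identity $\delta^{\sigma}_{(a+b)d}=\delta^{\sigma}_0$ directly for $\sigma=\tfrac{1}{n\ell^2}(1,n\ell c-1)$. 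Making the deformation-invariance of the $\chi(-dK)$ rigorous (ensuring the relative anticanonical sheaves form a flat family with fibrewise-constant Euler characteristic) is the delicate point. Finally I would remark that Proposition~\ref{prop:degree_formula} expresses $K_X^2=12-n-\sum_{\sigma\in\BB}A(\sigma)$ in terms of $(n,\BB)$, so that the entire right-hand side is intrinsic to the singularity content, as claimed.
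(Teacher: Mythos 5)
Your argument is correct and follows the route the paper intends: the paper offers no written proof beyond citing Proposition~\ref{prop:degree_formula} and \cite[\S8]{Reid85}, and your combination of orbifold Riemann--Roch, the crepant model $\widetilde{X}\to X$ from the proof of Proposition~\ref{prop:degree_formula}, and the vanishing of $Q_\sigma$ at $T$-singularities is exactly that chain of reasoning. The step you rightly flag as delicate --- that $\delta_{(a+b)i}=\delta_0$ for every $T$-singularity $\frac{1}{n\ell^2}(1,n\ell c-1)$, equivalently that a $\Q$-Gorenstein smoothing preserves every $\chi(-dK)$ --- is indeed the crux, and either of your two proposed justifications can be completed.
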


%-------------------------------------------------------------------------------
\subsection{Singularity content and mutation}\label{subsec:content_and_mutation}
%-------------------------------------------------------------------------------
A lattice polygon in $\NQ$ is called \emph{Fano} if $\orig$ lies its strict interior, and all its vertices are primitive; see~\cite{KN12} for an overview. The \emph{singularity content} of a Fano polygon $P$ is $\SC{P} := \SC{X_{\Sigma}}$, where $\Sigma$ is the spanning fan of $P$; that is, $\Sigma$ is the complete fan in $\NQ$ with cones spanned by the faces of $P$.

Under certain conditions, one can construct a Fano polygon $Q:=\mut_h(P,F)\subset\NQ$ called a \emph{(combinatorial) mutation} of $P$. Here $h\in M:=\Hom{N,\Z}$ is a primitive vector in the dual lattice, and $F\subset\NQ$ is a point or line segment satisfying $h(F)=0$. For the details of this construction see~\cite{ACGK12}.

\begin{prop}\label{prop:sing_content_is_invariant}
Let $Q:=\mathrm{mut}_h(P,F)$. Then $\SC{P} = \SC{Q}$. In particular, singularity content is an invariant of Fano polygons under mutation.
\end{prop}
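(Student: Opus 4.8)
The plan is to realize the mutation geometrically as a piecewise-linear map and then to compare the spanning fans $\Sigma_P$ and $\Sigma_Q$ cone by cone. Write $h^\perp=\{x\in\NQ:h(x)=0\}$ for the line fixed by the mutation. Following the construction of~\cite{ACGK12}, the mutation $\mut_h(P,F)$ is induced by a piecewise-linear map $\varphi\colon\NQ\to\NQ$ that restricts to a unimodular transvection on each of the closed half-planes $\{h\ge 0\}$ and $\{h\le 0\}$ and to the identity on $h^\perp$: concretely $\varphi$ is the identity on one half-plane and the shear $x\mapsto x+\lambda\,h(x)f_0$ on the other, where $f_0\in N$ is the primitive vector in the direction of $F$ and $\lambda$ is the lattice length of $F$. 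Each such transvection is unimodular and orientation-preserving, and $Q$ agrees with $\varphi(P)$ up to the convex-hull operation built into the definition of mutation. Before comparing fans I would refine each two-dimensional cone of $\Sigma_P$ and of $\Sigma_Q$ as in Proposition~\ref{prop:cone_decomposition}, subdividing it into $n_i$ elementary $T$-cones of width $\ell_i$ together with its single residual cone of width $\rho_i$. This refinement is crepant, and by Corollary~\ref{cor:tsing_decomposition} the integer $n$ is exactly the number of elementary $T$-cones in the refined fan while $\BB$ is exactly its cyclically-ordered list of residual cones; so it suffices to match the two refined fans.

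First I would dispose of the cones lying in one of the open half-planes. Such a cone $C$ is carried by $\varphi$ through a single unimodular transvection, so $\varphi(C)$ has the same singularity type; in particular its width, local index and residue are unchanged, and it contributes identically to $n$ and to $\BB$. Since $\varphi$ is orientation-preserving and fixes $h^\perp$ pointwise, these cones also keep their cyclic position. Hence $\SC{P}$ and $\SC{Q}$ can differ only through the finitely many cones meeting $h^\perp$, and, as $\orig$ lies in the interior of $P$, the line $h^\perp$ meets $\partial P$ in exactly two points $p_-,p_+$, one on each side of $\orig$. The whole content of the proposition is therefore concentrated in two local pictures, one at each of $p_\pm$.

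The \emph{crux} is the analysis at a crossing point, say $p_+$: there $\varphi$ is genuinely bent, leaving the cones on the $\{h\ge 0\}$ side untouched while shearing those on the $\{h\le 0\}$ side, and the two families are reconciled by the convexification across $h^\perp$. I would first show that the residual cone at the crossing is rigid: using $r=w\ell$ from Lemma~\ref{lem:height_width_singularity} together with the determinant identity $\det(\varphi u,v)=\det(u,v)+\lambda\,h(u)\,h(v)$ for a crossing edge $\scone{u,v}$ (with $u$ below and $v$ above $h^\perp$), one sees that the shear preserves the data $(\rho,\ell)$ determining the residue, so that Proposition~\ref{prop:cone_decomposition}, which guarantees that the residue depends only on the cone, identifies $\res{C}$ before and after. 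This shows $\BB$ is preserved as a cyclically-ordered list. It then remains to see that the count $n$ is preserved: the shear creates and destroys no residual cones but only inserts or deletes elementary $T$-cones along $h^\perp$, and by Corollary~\ref{cor:tsing_decomposition} these are precisely the flexible pieces, so I would argue that the number removed on the sheared side is balanced by the number absorbed across the crossing.

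The main obstacle is exactly this final local bookkeeping at $p_\pm$: away from $h^\perp$ the mutation acts unimodularly and nothing happens, but at a crossing the fan genuinely changes, the point $p_+$ need not be a lattice point, and a single edge of $P$ may split into two edges of $Q$ or two edges may merge. To control the resulting ambiguity I would exploit the fact that the inverse of a mutation is again a mutation~\cite{ACGK12}: the analysis above already yields $\BB(P)=\BB(Q)$ outright together with a one-sided comparison of the $T$-counts, and applying the same argument to the inverse mutation upgrades this to the equality $n(P)=n(Q)$. Combined with the orientation-preservation of $\varphi$, which fixes the cyclic ordering of the residues, this gives $\SC{P}=\SC{Q}$.
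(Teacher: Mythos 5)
Your argument rests on a false description of what a mutation does in $\NQ$. You take $\varphi$ to be the piecewise-linear map that is the identity on one of the half-planes cut out by $h^\perp$ and the shear $x\mapsto x+\lambda\,h(x)f_0$ on the other, and assert that $Q=\varphi(P)$ up to convexification. But by the definition in~\cite{ACGK12}, the slice of $P$ at height $k$ is replaced by its Minkowski difference with $(-k)F$ when $k<0$ and by its Minkowski sum with $kF$ when $k\geq 0$: a shear translates each slice rigidly, whereas mutation changes its lattice length by $\abs{k}\lambda$. Concretely, the two endpoints of $P_\mmin$ lie at the same height, yet under mutation one is fixed and the other is translated; and a vertex of $P$ at height $k>0$ can become an edge of $Q$ of lattice length $k\lambda$. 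No piecewise-linear homeomorphism whose pieces are determined by the sign of $h$ does either of these things. The object that genuinely transforms by a piecewise-$GL_2(\Z)$ map (identity on one side of a line, a transvection on the other) is the \emph{dual} polygon $\dual{P}\subset\MQ$, with bending locus the normal fan of $F$ --- and that is the space in which the paper runs exactly the argument you are aiming for.

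This error propagates to your localization. You concentrate the proof at the two points where $h^\perp$ meets $\partial P$, but the cones over the facets crossed by $h^\perp$ correspond to vertices of $\dual{P}$ lying in the open half-spaces, so they are carried by a lattice automorphism and are unaffected. The cones that actually change are those over $P_\mmin$ and $P_\mmax$, the faces parallel to $F$ at extreme height, whose dual vertices sit on the bending locus. The entire content of the proposition is the claim that mutation removes some elementary $T$-cones from the cone over $P_\mmin$ and inserts the same number into the cone over $P_\mmax$, leaving both residues fixed; this is precisely the step your proposal defers (``the number removed on the sheared side is balanced by the number absorbed'') and then tries to recover via the inverse mutation. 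That device cannot close the gap: symmetrizing $P\leftrightarrow Q$ only upgrades a one-sided inequality on $n$ to an equality if some one-sided inequality has been proved, and none has. (A legitimate shortcut for the count $n$ alone, once $\BB(P)=\BB(Q)$ is known, would be to combine Proposition~\ref{prop:degree_formula} with the degree-invariance of mutation from~\cite{ACGK12}, but your text does not take that route.) Both the global reduction and the local analysis therefore need to be redone in $\MQ$, as in the paper's proof.
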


\begin{proof}The dual polygon $\dual{P}\subset\MQ$ is an intersection of cones
$$\dual{P} = \bigcap\left(\dual{C}_L - v_L\right),$$
where the intersection ranges over all facets $L$ of $P$. Here $C_L\subset\NQ$ is the cone over the facet $L$ and $v_L$ is the vertex of $\dual{P}$ corresponding to $L$.

If $F$ is a point then $P\cong Q$ and we are done. Let $F$ be a line segment and let $P_\mmax$ and $P_\mmin$ (resp.~$Q_\mmax$ and $Q_\mmin$) denote the faces of $P$ (resp.~$Q$) at maximum and minimum height with respect to $h$. By assumption the mutation $Q$ exists, hence $P_\mmin$ must be a facet, and so there exists a corresponding vertex $v_0\in M$ of $\dual{P}$. $P_\mmax$ can be either facet or a vertex. The argument is similar in either case, so we will assume that $P_\mmax$ is a facet with corresponding vertex $v_1\in M$ of $\dual{P}$.

The inner normal fan of $F$, denoted $\Sigma$, defines a decomposition of $\MQ$ into half-spaces $\Sigma^+$ and $\Sigma^-$. The vertices $v_0$ and $v_1$ of $\dual{P}$ lie on the rays of $\Sigma$; any other vertex lies in exactly one of $\Sigma^+$ or $\Sigma^-$. Mutation acts as an automorphism in both half-spaces. Thus the contribution to $\SC{Q}$ from cones over all facets excluding $Q_\mmax$ and $Q_\mmin$ is equal to the contribution to $\SC{P}$ from cones over all facets excluding $P_\mmax$ and $P_\mmin$. Finally, mutation acts by exchanging $T$-singular subcones between the facets $P_\mmax$ and $P_\mmin$, leaving the residue unchanged. Hence the contribution to $\SC{Q}$ from $Q_\mmax$ and $Q_\mmin$ is equal to the contribution to $\SC{P}$ from $P_\mmax$ and $P_\mmin$.
\end{proof}

\begin{example}
If two Fano polygons are related by a sequence of mutations then the corresponding toric surfaces have the same anti-canonical degree~\cite[Proposition 4]{ACGK12}. The Fano polygons $P_1:=\sconv{(0,1),(5,4),(-7,-8)}$ and $P_2:=\sconv{(0,1),(3,1),(-112,-79)}$ correspond to $\Proj(5,7,12)$ and $\Proj(3,112,125)$, respectively. These both have degree $48/35$, however their singularity contents differ:
\[
\textstyle
\SC{P_1}=\left(12,\left\{\frac{1}{5}(1,1),\frac{1}{7}(1,1)\right\}\right),\qquad
\SC{P_2}=\left(5,\left\{\frac{1}{14}(1,9),\frac{1}{125}(1,79)\right\}\right).
\]
Hence they are not related by a sequence of mutations.
\end{example}

\begin{lemma}\label{lem:sing_number_bounds_picard_rank}
Let $P$ be a Fano polygon with $\SC{P}=(n,\BB)$, and let $\rho_X$ denote the Picard rank of the corresponding toric surface. Then $\rho_X\leq n+\abs{\BB}-2$.
\end{lemma}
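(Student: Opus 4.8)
The plan is to reduce the bound to a statement about the number of rays in a suitable fan, reusing the machinery from the proof of Proposition~\ref{prop:degree_formula}. First I would recall the relationship between the Picard rank of a complete toric surface and the combinatorics of its fan: if $\Sigma$ is the spanning fan of $P$ with $m$ two-dimensional cones (equivalently $m$ rays, since the surface is complete), then $\rho_X = m - 2$. Thus the inequality $\rho_X \leq n + \abs{\BB} - 2$ is equivalent to showing that the number of rays of $\Sigma$ is at most $n + \abs{\BB}$.

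Next I would pass to the refined fan $\widetilde{\Sigma}$ constructed exactly as in the proof of Proposition~\ref{prop:degree_formula}: each two-dimensional cone $C_i$ is partitioned as $C_i = S \cup R_{C_i}$, where $S$ is a (possibly empty or smooth) $T$-singularity and $R_{C_i} = \res{C_i}$. The point is that this refinement, followed by the construction of $Y \to \widetilde{X}$ which resolves the residual cones, only increases the number of rays, so it suffices to bound the original count from the contributions recorded by the singularity content. Concretely, each cone $C_i$ with width $w_i = n_i \ell_i + \rho_i$ contributes a single ray of $\Sigma$ (say its "lower" ray $u_i$, shared with the neighbouring cone), and the data $(n,\BB)$ packages precisely how many elementary $T$-subcones ($n = \sum_i n_i$) and how many nonempty residues ($\abs{\BB} = \#\{i : \rho_i \neq 0\}$) arise across all cones. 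The key counting step is that the minimal number of rays needed to realise a cone of singularity content $(n_i, \res{C_i})$, without adding redundant subdivisions, is at most $n_i + \epsilon_i$ where $\epsilon_i = 1$ if $\rho_i \neq 0$ and $\epsilon_i = 0$ otherwise; summing over $i$ and accounting for the shared rays between cyclically adjacent cones gives $m \leq n + \abs{\BB}$.

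I expect the main obstacle to be making the ray-counting argument precise at the boundaries between adjacent cones, since rays are shared and one must avoid double-counting while simultaneously ensuring that the subdivision realising each $C_i$ from its singularity content is the most economical one. In particular, the subtlety is that the bound must hold for the \emph{original} fan $\Sigma$, whose cones may be "coarse" (a single cone $C_i$ of width $w_i$ is one cone, not $n_i + 1$ cones), so the inequality $\rho_X \leq n + \abs{\BB} - 2$ is comparing the actual fan against the maximally-refined one; the content of the lemma is that refinement can only add rays, never remove them. I would handle this by observing that Proposition~\ref{prop:cone_decomposition} gives, for each cone, a canonical crepant subdivision into $n_i$ elementary $T$-cones plus the residue, and that the number of rays in $\Sigma$ is bounded above by the number of rays in this total subdivision, which is exactly $\sum_i(n_i + \epsilon_i) = n + \abs{\BB}$ once shared rays are identified. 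This yields $\rho_X = m - 2 \leq n + \abs{\BB} - 2$, as required.
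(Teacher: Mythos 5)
Your proposal is correct and follows essentially the same route as the paper: the paper simply notes that each cone over a facet admits a subdivision (in the sense of Notation~\ref{notation:crepant}) into at least one subcone, so $\abs{\V{P}}\leq n+\abs{\BB}$, and concludes via $\rho_X=\abs{\V{P}}-2$. Your ray-count $m\leq n+\abs{\BB}$ is the same count phrased through the refined fan, so the extra worry about shared rays and double-counting is not needed.
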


\begin{proof}
The cone over any facet of $P$ admits a subdivision (in the sense of Notation~\ref{notation:crepant}) into at least one subcone. Therefore we must have that $\abs{\V{P}}\leq n+\abs{\BB}$. Recalling that $\rho_X=\abs{\V{P}}-2$ we obtain the result.
\end{proof}

Since singularity content is preserved under mutation, Lemma~\ref{lem:sing_number_bounds_picard_rank}  gives an upper bound on the rank of the resulting toric varieties.

\begin{example}
In~\cite{AK13} we classified one-step mutations of (fake) weighted projective planes. It is natural to ask how much of the graph of mutations of a given (fake) weighted projective plane is captured by the graph of one-step mutations. Lemma~\ref{lem:sing_number_bounds_picard_rank} shows that the two graphs coincide if the singularity content of the (fake) weighted projective plane in question satisfies $n+\abs{\BB}=3$. For example the full mutation graph of $\Proj^2$ is isomorphic to the graph of solutions of the Markov equation $3xyz = x^2 + y^2 + z^2$~\cite[Example 3.14]{AK13}. More interestingly, the weighted projective plane $\Proj(3,5,11)$ does not admit \emph{any} mutations~\cite[Example 3.5]{AK13}. 
\end{example}

%-------------------------------------------------------------------------------
\subsection*{Acknowledgements}
%-------------------------------------------------------------------------------
We thank Tom Coates, Alessio Corti, and Diletta Martinelli for many useful conversations. This research is supported by EPSRC grant EP/I008128/1.

%-------------------------------------------------------------------------------
\bibliographystyle{amsalpha}

\begin{thebibliography}{ACGK12}

\bibitem[ACGK12]{ACGK12}
Mohammad Akhtar, Tom Coates, Sergey Galkin, and Alexander~M. Kasprzyk,
  \emph{Minkowski polynomials and mutations}, SIGMA Symmetry Integrability
  Geom. Methods Appl. \textbf{8} (2012), Paper 094, 17.

\bibitem[AK13]{AK13}
Mohammad Akhtar and Alexander~M. Kasprzyk, \emph{Mutations of fake weighted
  projective planes},
  \href{http://arxiv.org/abs/1302.1152}{\texttt{arXiv:1302.1152 [math.AG]}},
  2013.

\bibitem[Ful93]{Ful93}
William Fulton, \emph{Introduction to toric varieties}, Ann. of Math. Stud.,
  vol. 131, Princeton University Press, Princeton, NJ, 1993, The William H.
  Roever Lectures in Geometry.

\bibitem[HP10]{HP10}
Paul Hacking and Yuri Prokhorov, \emph{Smoothable del {P}ezzo surfaces with
  quotient singularities}, Compos. Math. \textbf{146} (2010), no.~1, 169--192.

\bibitem[KN12]{KN12}
Alexander~M. Kasprzyk and Benjamin Nill, \emph{Fano polytopes}, Strings, Gauge
  Fields, and the Geometry Behind -- the Legacy of {M}aximilian {K}reuzer
  (Anton Rebhan, Ludmil Katzarkov, Johanna Knapp, Radoslav Rashkov, and Emanuel
  Scheidegger, eds.), World Scientific, 2012, pp.~349--364.

\bibitem[KSB88]{KS-B88}
J.~Koll{\'a}r and N.~I. Shepherd-Barron, \emph{Threefolds and deformations of
  surface singularities}, Invent. Math. \textbf{91} (1988), no.~2, 299--338.

\bibitem[Rei87]{Reid85}
Miles Reid, \emph{Young person's guide to canonical singularities}, Algebraic
  geometry, Bowdoin, 1985 (Brunswick, Maine, 1985), Proc. Sympos. Pure Math.,
  vol.~46, Amer. Math. Soc., Providence, RI, 1987, pp.~345--414.

\bibitem[Tzi09]{Tzi05}
Nikolaos Tziolas, \emph{{$\Bbb Q$}-{G}orenstein deformations of nonnormal
  surfaces}, Amer. J. Math. \textbf{131} (2009), no.~1, 171--193.

\end{thebibliography}
\providecommand{\bysame}{\leavevmode\hbox to3em{\hrulefill}\thinspace}
\providecommand{\MR}{\relax\ifhmode\unskip\space\fi MR }
% \MRhref is called by the amsart/book/proc definition of \MR.
\providecommand{\MRhref}[2]{%
  \href{http://www.ams.org/mathscinet-getitem?mr=#1}{#2}
}
\providecommand{\href}[2]{#2}

%-------------------------------------------------------------------------------
\end{document}